%

\documentclass[aap,MSNbibl,nameyear,dvips]{arximspdf}
\usepackage{mathbh}
\usepackage{algorithmic, algorithm}
\usepackage{graphicx}

%

\doi{10.1214/12-AAP913} 
\volume{24}
\issue{1}
\pubyear{2014}
\firstpage{34}
\lastpage{53}

\makeatletter

\newcommand{\rrvert}{\vert}
\newcommand{\llvert}{\vert}
\newtheorem{theorem}{Theorem}
\newtheorem{lemma}[theorem]{Lemma}
\newproclaim{assumption}{Assumption}
\newtheorem{corollary}[theorem]{Corollary}

\newcommand{\Ind}{\mathbh{1}}
\makeatother

\begin{document}
\begin{frontmatter}

\title{The Wang--Landau algorithm reaches the flat histogram criterion in finite time}
\runtitle{Wang--Landau Flat Histogram in finite time}

\begin{aug}
\author[A]{\fnms{Pierre E.} \snm{Jacob}\thanksref{nus}\ead[label=e1]{stajpe@nus.edu.sg}}
\and
\author[B]{\fnms{Robin J.} \snm{Ryder}\corref{}\thanksref{gis}\ead[label=e2]{ryder@ceremade.dauphine.fr}}
\thankstext{nus}{Supported in part by AXA research.}
\thankstext{gis}{Supported in part by a GIS Sciences de la D\'ecision postdoc scholarship.}
\runauthor{P.~E. Jacob and R.~J. Ryder}
\affiliation{National University of Singapore and Universit\'e Paris Dauphine}
\address[A]{Department of Statistics \&\\
\quad Applied Probability\\
National University of Singapore\\
Singapore\\
\printead{e1}}
\address[B]{Centre de Recherche en \'Economie\\
\quad et Statistique and CEREMADE\\
Universit\'e Paris Dauphine\\
Place du Mar\'echal de Lattre de Tassigny\\
75016 Paris\\
France\\
\printead{e2}}
\end{aug}

\received{\smonth{10} \syear{2011}}
\revised{\smonth{12} \syear{2012}}

%
\begin{abstract}
The Wang--Landau algorithm aims at sampling from a probability
distribution, while penalizing some regions of the state space and
favoring others. It is widely used, but its convergence properties are
still unknown. We show that for some variations of the algorithm, the
Wang--Landau algorithm reaches the so-called flat histogram criterion
in finite time, and that this criterion can be never reached for other
variations. The arguments are shown in a simple context---compact
spaces, density functions bounded from both sides---for the sake of
clarity, and could be extended to more general contexts.
\end{abstract}

%
\begin{keyword}[class=AMS]
\kwd[Primary ]{65C05}
\kwd{60J22}
\kwd[; secondary ]{60J05}
\end{keyword}
\begin{keyword}
\kwd{Markov chain}
\kwd{Monte Carlo}
\kwd{Wang--Landau algorithm}
\kwd{flat histogram}
\end{keyword}

\end{frontmatter}

\section{Introduction and notation}\label{sec1}

Consider the problem of sampling from a probability distribution $\pi$ defined
on a measure space $(\mathcal{X}, \Sigma, \mu)$. We suppose that we
can compute
the probability density function of $\pi$ at any point \mbox{$x\in\mathcal
{X}$}, up to
a multiplicative constant. Given a proposal kernel $Q(\cdot,\cdot)$
we define
a Metropolis--Hastings (MH) [\citet{Hastings,Tierney}] transition kernel
targeting $\pi$, denoted by $K(\cdot, \cdot)$, as follows:
\[
\forall x,y \in\mathcal{X}\qquad K(x, y) = Q(x, y) \rho(x, y) + \delta_x(y)
\bigl(1 - r(x) \bigr)
\]
with $\rho(x, y)$ defined by $\rho(x, y):= 1 \wedge
\frac{\pi(y)}{\pi(x)}\frac{Q(y,x)}{Q(x,y)}$ and $r(x)$ defined by
\[
r(x):= \int_\mathcal{X} \rho(x,y) Q(x, y) \,dy.
\]
Here the delta function $\delta_a(b)$ takes value $1$ when $a = b$ and $0$
otherwise. Under some conditions on the proposal $Q$ and the target
$\pi
$, the
MH kernel defines an algorithm to generate a Markov chain with stationary
distribution~$\pi$ [\citet{Robert2004a}].

Let us consider a partition of the state space $\mathcal{X}$ into $d$ disjoint
sets $\mathcal{X}_1, \ldots, \mathcal{X}_d$,
\[
\mathcal{X} = \bigcup_{i=1}^d
\mathcal{X}_i.
\]
If we have a sample $X_1, \ldots, X_t$ independent and identically distributed
from $\pi$, then for any $i\in[1,d]$,
\[
\frac{1}{t}\sum_{n=1}^t
\Ind_{\mathcal{X}_i}(X_n) \mathop{\longrightarrow}^{\mathbb{P}}_{t\to
\infty}
\int_{\mathcal{X}_i}\pi(x)\,dx =: \psi_i,
\]
where we denote by $\Ind_{\mathcal{X}_i}(x)$ the indicator function
that is
equal to $1$ when $x\in\mathcal{X}_i$ and $0$ otherwise. Similar
convergence is
obtained when $X_1, \ldots, X_t$ is an ergodic chain such as the one generated
by the MH algorithm. The purpose of the
Wang--Landau algorithm [Wang and Landau (\citeyear{Wang2001a,Wang2001b}), \citet{Liang2005a,Atchade2010a}] is
to obtain a sample:
\begin{itemize}
\item
such that for any $i\in[1,d]$, the subsample
\[
\bigl\{X_n \mbox{ for } n \in[1,t] \mbox{ s.t. } X_n
\in\mathcal{X}_i \bigr\}
\]
is distributed according to the restriction of $\pi$ to
$\mathcal{X}_i$, and
\item such that for any $i\in[1,d]$,
\[
\frac{1}{t}\sum_{n=1}^t
\Ind_{\mathcal{X}_i}(X_n) \mathop{\longrightarrow}^{\mathbb{P}}_{t\to\infty}
\phi_i,
\]
where $\phi= (\phi_1, \ldots, \phi_d)$ is chosen by the user, and
could be any vector in $]0,1[^d$ such that $\sum_{i=1}^d \phi_i = 1$.
\end{itemize}
A typical use of this algorithm is to sample from multimodal
distributions, by
penalizing already-visited regions and favoring the exploration of regions
between modes, in an attempt to recover all the modes. In many
applications, $\phi$
is set to $\forall i, \phi_i=1/d$; however, other choices are
possible, as exemplified
by the adaptive binning scheme of \citet{bornn2011adaptive}.

This algorithm, in the class of Markov chain Monte Carlo (MCMC) algorithms
[\citet{Robert2004a}], therefore allows us to learn about $\pi$ while
``forcing''
the proportions of visits $\phi_i$ of the generated chain to any of
the sets
$\mathcal{X}_i$, which are typically also chosen by the user. The vector
$\phi_1, \ldots, \phi_d$ might be referred to as the ``desired frequencies,''
and the sets $\mathcal{X}_i$ are called the ``bins.'' In a typical situation,
the mass of $\pi$ over bin $\mathcal{X}_i$, which we denote by $\psi
_i$, is
unknown, and hence one cannot easily guess how much to ``penalize'' or to
``favor'' a bin $\mathcal{X}_i$ in order to obtain the desired frequency
$\phi_i$. The Wang--Landau algorithm introduces a vector $\theta_t =
(\theta_t(1), \ldots, \theta_t(d))$, referred to as ``penalties'' at
time $t$,
which is updated at every iteration $t$, and which acts like an approximation
of the ratios $\psi_1 / \phi_1, \ldots, \psi_d / \phi_d$, up to
a multiplicative constant.\vadjust{\goodbreak}

For a distribution $\pi$ and a vector of penalties $\theta= (\theta(1),
\ldots, \theta(d))$, we define the penalized distribution $\pi
_\theta$,
\[
\pi_{\theta}(x) \propto\pi(x) \times\sum_{i=1}^{d}
\frac{\Ind_{\mathcal{X}_i}(x)}{\theta(i)}.
\]
To be more concise we define a function $J\dvtx\mathcal{X}\mapsto\{
1,\ldots,d\}$
that takes a state $x \in\mathcal{X}$ and returns the index $i$ of
the bin
$\mathcal{X}_i$ such that $x\in\mathcal{X}_i$. We can now write
\(\pi_\theta(x) \propto\pi(x)/\theta(J(x))\). We will denote by
$K_{\theta}$
the MH kernel targeting~$\pi_\theta$.

The Wang--Landau algorithm, described in the next section, alternates between
generating a sample by targeting $\pi_\theta$ using $K_\theta$, and updating
$\theta$ using the generated sample. In this sense it is an adaptive MCMC
algorithm (past samples are used to update the kernel at a given iteration),
using an auxiliary chain $(\theta_t)$, and therefore the behavior of the
sample is not obvious.

The Wang--Landau algorithm is widely used in the Physics community
[\citet{silva2006wang}, \citet{malakis2006monte}, \citet{cunha2006wang}]. In particular, many
practitioners use flavors of the algorithm with a ``flat histogram''
criterion. However, its convergence properties are still partially
unknown. We
show that this criterion is reached in finite time for some variations
of the algorithm. This result is all that was
missing to apply results on adaptive algorithms with diminishing adaptation
[\citet{FortMoulinesCLT}].

In Section \ref{secWLdefinitions}, we define variations of the Wang--Landau
algorithm. We then introduce ratios of penalties and argue for their
convenience in studying the properties of the algorithm.
We prove in Sections \ref{secdequals2} and \ref{secproofanyd} that
under certain
conditions, the flat histogram criterion is met in finite time, for the cases
$d = 2$ and $d > 2$, respectively. The result is
illustrated in Section \ref{sectoyexample}, and in Section
\ref{secdiscussion}, we hint at how our assumptions might be relaxed.

\section{Wang--Landau algorithms: Different flavors}\label{secWLdefinitions}

There are several versions of the Wang--Landau algorithm. We describe the
general version introduced by \citet{Atchade2010a}, both in its deterministic
form and with a stochastic schedule.

\subsection{A first version with deterministic schedule}

Let $(\gamma_t)_{t\in\mathbb{N}}$ (referred to as a schedule or a
temperature)
be a sequence of positive real numbers such that
\[
\cases{ \displaystyle\sum_{t \geq0}\gamma_t =
\infty,\vspace*{2pt}
\cr
\displaystyle\sum_{t \geq0}
\gamma_t^2 < \infty.} %
\]
A typical choice is $\gamma_t:= t^{-\alpha}$ with $\alpha\in\,]0.5,
1[$. The
Wang--Landau algorithm is described in pseudo-code in Algorithm \ref{algoWL}.
In this form, the schedule $\gamma_t$ decreases at each iteration, and is
therefore called ``deterministic.''
\begin{algorithm}[t]
\caption{Wang--Landau with deterministic schedule\label{algoWL}.}
\begin{algorithmic}[1]
\STATE Init $\forall i\in\{ 1, \ldots, d\}$ set $\theta_0(i)
\leftarrow1/d$.
\STATE Init $X_0 \in\mathcal{X}$.
\FOR{$t=1$ to $T$}
\STATE Sample $X_t$ from $K_{\theta_{t-1}}(X_{t-1},\cdot)$, MH
kernel targeting
$\pi_{\theta_{t-1}}$.
\STATE\label{algoWLupdate} Update the penalties: $\log\theta_{t}(i)
\leftarrow
\log\theta_{t-1}(i) +
f(\Ind_{\mathcal{X}_i}(X_t),\phi_i, \gamma_t)$.
\ENDFOR
\end{algorithmic}
\end{algorithm}

Step \ref{algoWLupdate} of Algorithm \ref{algoWL} updates the
penalties from
$\theta_{t-1}$ to $\theta_t$, by increasing it if the corresponding
bin has
been visited by the chain at the current iteration, and by decreasing it
otherwise. This rationale seems natural; however, we did not find any article
arguing for a particular choice of update, among the infinite number of updates
that would also follow the same rationale. In other words, it is not obvious
how to choose the function $f$, except that it should be such that it is
positive when $X_t \in\mathcal{X}_i$ and such that it is closer to
$0$ when
$\gamma_t$ decreases, to ensure that the penalties converge. Some
practitioners use the following update:
%
%
\begin{equation}
\label{eqrightupdate} \log\theta_{t}(i) \leftarrow\log
\theta_{t-1}(i) + \gamma_t \bigl(\Ind_{\mathcal{X}_i}(X_t)
- \phi_i \bigr)
\end{equation}
while others use
%
%
\begin{equation}
\label{eqwrongupdate} \log\theta_{t}(i) \leftarrow\log
\theta_{t-1}(i) + \log\bigl[1 + \gamma_t \bigl(
\Ind_{\mathcal{X}_i}(X_t) - \phi_i \bigr) \bigr].
\end{equation}
Since $\gamma_t$ converges to $0$ when $t$ increases, and since update
(\ref{eqrightupdate}) is the first-order Taylor expansion of update
(\ref{eqwrongupdate}), one legitimately expects both updates to result in
similar performance in practice. We shall see in Section \ref
{secdequals2} that
this is not necessarily the case.

Some convergence results have been proven about Algorithm \ref
{algoWL}: the
deterministic schedule ensures that $\theta_t$ changes less and less
along the
iterations of the algorithm, and consequently the kernels $K_{\theta
_t}$ change
less and less as well. The study of the algorithm hence falls into the
realm of
adaptive MCMC where the \emph{diminishing adaptation} condition holds
[\citet{Andrieu2008b,Atchade2009c,FortMoulinesCLT}], although it is
original in
the sense that the target distribution, ($\pi_{\theta_t}$) is adaptive,
but not
necessarily the proposal distribution $Q$. See also the literature on
stochastic approximation, \citet{Andrieu2006b}.

In this article we are especially interested in a more sophisticated
version of
the Wang--Landau algorithm that uses a stochastic schedule, and for
which, as we
shall see in the following, the two updates result in different performance.

\subsection{A sophisticated version with stochastic schedule}

A remarkable improvement has been made over Algorithm \ref{algoWL}:
the use
of a ``flat histogram'' (FH) criterion to decrease the schedule only at certain
random times. Let us introduce $\nu_t(i)$, the number of generated
points at
iteration $t$ that are in~$\mathcal{X}_i$,
\[
\nu_t(i):= \sum_{n=1}^t
\Ind_{\mathcal{X}_i}(X_n).
\]
For some predefined precision threshold $c$, we
say that FH is met at iteration $t$ if
\[
\max_{i\in\{1,\ldots,d\}} \biggl\llvert\frac{\nu_t(i)}{t} -
\phi_i \biggr\rrvert< c.
\]
Intuitively, this
criterion is met if the observed proportion of visits to each bin is
not far
from $\phi$, the desired proportion. The name ``flat histogram'' comes from
the observation that if the desired proportions are all equal to $1/d$, this
criterion is verified when the histogram of visits is approximately
flat. The
threshold $c$ could possibly decrease along the iterations to get an always
finer precision.

The Wang--Landau with flat histogram (Algorithm \ref{algoWLFH}) is
similar to
the previous algorithm, with a single difference: the schedule $\gamma
$ does
not decrease at each step anymore, but only when FH is met. To know whether
it is met or not, a counter $\nu_t$ of visits to each bin is updated
at each
iteration, and when FH is met, the schedule decreases and the counter is
reset to $0$.

\begin{algorithm}[t]
\caption{Wang--Landau with flat histogram \label{algoWLFH}}
\begin{algorithmic}[1]
\STATE Init $\forall i\in\{ 1, \ldots, d\}$ set $\theta_0(i)
\leftarrow1/d$.
\STATE Init $X_0 \in\mathcal{X}$.
\STATE Init $\kappa= 0$, the number of FH criteria already reached.
\STATE Init the counter $\forall i\in\{ 1, \ldots, d\}\ \nu_1(i)
\leftarrow0$
\FOR{$t=1$ to $T$}
\STATE Sample $X_t$ from $K_{\theta_{t-1}}(X_{t-1},\cdot)$ targeting
$\pi_{\theta_{t-1}}$.
\STATE Update $\nu_t$: $\forall i\in\{ 1, \ldots, d\}\ \nu_t(i)
\leftarrow\nu_{t-1}(i)+\Ind_{\mathcal X_i}(X_t)$
\STATE Check whether FH is met.
\IF{FH is met}
\STATE$\kappa\leftarrow\kappa+ 1$
\STATE$\forall i\in\{ 1, \ldots, d\}\ \nu_t(i)\leftarrow0$
\ENDIF
\STATE\label{algoWLFHupdate} Update the bias:
$\log\theta_t(i) \leftarrow\log\theta_{t-1}(i) +
f(\Ind_{\mathcal{X}_i}(X_t),\phi_i, \gamma_\kappa)$.
\ENDFOR
\end{algorithmic}
\end{algorithm}

Note the difference between Algorithms \ref{algoWL} and \ref{algoWLFH}:
$\gamma$ is indexed by $\kappa$ instead of~$t$, and $\kappa$ is a random
variable. As with Algorithm \ref{algoWL}, the update of penalties (step
\ref{algoWLFHupdate} of Algorithm \ref{algoWLFH}) can be either update
(\ref{eqrightupdate}) or update (\ref{eqwrongupdate}), or possibly something
else.
Interestingly in this case, it is not obvious anymore that both updates will
give similar results. Indeed, for $\gamma_\kappa$ to go to $0$, we
need FH
to be reached in finite time, so that $\kappa$ regularly increases.

This flavor of the Wang--Landau algorithm is widely used in the Physics
literature [\citet{cunha2006wang,silva2006wang,malakis2006monte,ngo2008phase}].

Our contribution is to show in a simple context that update
(\ref{eqrightupdate}) is such that FH is met in finite time, while
(\ref{eqwrongupdate}) is not so. Hence only using update
(\ref{eqrightupdate}) can one expect the convergence properties of Algorithm~\ref{algoWL}
to still hold for Algorithm~\ref{algoWLFH}, since if~FH is met
in finite time, a sort of \emph{diminishing adaptation} condition
would still
hold.

To underline the difficulty of knowing whether FH is met in finite
time or
not, let us recall that between two FH occurrences, the schedule is constant
(equal to some $\gamma_\kappa> 0$); hence the penalties $(\theta_t)$ change
at a constant scale and \emph{diminishing adaptation} does not
directly hold.
Other adaptive algorithms share this lack of \emph{diminishing
adaptation}, as, for example, the accelerated stochastic approximation algorithm
[\citet{Kesten1958}], in which the adaptation of some process diminishes
only if its increments change sign. In our case, FH will be reached
if the
chain $(X_t)$ lands with frequency $\phi_i$ in each bin
$\mathcal{X}_i$; see Corollary \ref{corolFH}.

Note that in the implementation of the algorithm, the penalties
$\theta_t$ need only be defined up to a normalizing constant, since
they only
appear in ratios of the form $\theta_t(i)/\theta_t(j)$. We therefore
introduce the following notation:
\[
\forall i,j\in\{1,\ldots,d\} \mbox{ such that } i \neq j \qquad Z_t^{(i,j)}
= \log\frac{\theta_t(i)}{\theta_t(j)},
\]
and we note $Z_t$ the collection of all the $Z_t^{(i,j)}$.
Some intuition behind the study of such ratios comes from considering
update (\ref{eqrightupdate}). With this
update, assume that for each $i$, $\mathbb{E}[\Ind_{\mathcal
X_i}(X_t)]=\phi_i$. Then we could easily check that
for each pair $(i, j)$, $\mathbb
{E}[Z_t^{(i,j)}|Z_{t-1}^{(i,j)}]=Z_{t-1}^{(i,j)}$,
so this process would be constant on average.
The remainder of this paper hinges on two facts:
that we can control $(Z_t)$ in the sense that $Z_t^{(i,j)}/t \to0$,
and that
if we control $(Z_t)$, then we control the frequencies of visits $(\nu_t/t)$.

More generally, notice that with fixed
$\gamma$, the pair $(X_t, Z_t)$ forms a homogeneous Markov chain. We
would like to prove that its
proportion of visits to the set $\mathcal X_i \times\mathbb R^{d(d-1)}$
converges to some value in $[0,1]$; one way to prove this is to show
that the chain is irreducible.
We would then need to check that the limit
is indeed the desired frequency $\phi_i$ for all $i$. Unfortunately, properties
of the joint chain $(X_t, Z_t)$ are difficult to establish due to the
complexity of its transition kernel. Finding a so-called drift function
for the joint
Markov chain is also typically difficult. In general, we are not able
to show
that the chain is irreducible. In Section \ref{secdequals2}, we prove
directly that $Z_t^{1,2}/t\to0$ in the special case $d=2$, under some
assumptions. In Section \ref{secproofanyd}, we make more restrictive
assumptions which imply irreducibility. In both cases, we show the implication
of this convergence on the frequencies of visits.

\section{Proof when $d=2$}\label{secdequals2}

In the following we consider a simple context with only two bins: $d = 2$
and $X_t$ can therefore only be either in $\mathcal{X}_1$ or in
$\mathcal{X}_2$.
Suppose the current schedule is at $\gamma> 0$, and we want to know whether
FH is going to be met in finite time (hence $\gamma$ is fixed here).
To simplify notation,
in this section we note
\[
Z_t = Z_t^{(1,2)} = \log\theta_t(1) -
\log\theta_t(2).
\]

Using the definition of the penalties
$(\theta_t)$ and of the counts $(\nu_t)$, we obtain
\begin{eqnarray*}
Z_t& =& Z_0 + \bigl[\nu_t(1) f(1,
\phi_1, \gamma) + \bigl(t - \nu_t(1) \bigr) f(0,
\phi_1, \gamma) \bigr]
\\
&&{}- \bigl[\nu_t(2) f(1, \phi_2, \gamma) + \bigl(t -
\nu_t(2) \bigr) f(0, \phi_2, \gamma) \bigr]
\\
&=& Z_0 + \nu_t(1) \bigl[f(1, \phi_1,
\gamma) - f(0, \phi_1, \gamma) \bigr] + t f(0, \phi_1,
\gamma)
\\
&&{}- \bigl[ \bigl(t - \nu_t(1) \bigr) f(1, \phi_2, \gamma)
+ \nu_t(1) f(0, \phi_2, \gamma) \bigr]
\\
&=& Z_0 + \nu_t(1) \bigl[f(1, \phi_1,
\gamma) - f(0, \phi_1, \gamma) + f(1, \phi_2, \gamma) -
f(0, \phi_2, \gamma) \bigr]
\\
&&{}+ t \bigl(f(0, \phi_1, \gamma) - f(1, \phi_2, \gamma)
\bigr).
\end{eqnarray*}

If we prove that $Z_t / t$ goes to $0$ (e.g., in mean), this
will imply
the following convergence of the proportion of visits:
\[
\frac{\nu_t(1)}{t} \mathop{\longrightarrow}_{t\to\infty} \frac{f(1, \phi_2,
\gamma) - f(0, \phi_1,
\gamma)}{
f(1, \phi_1, \gamma) - f(0, \phi_1, \gamma) +
f(1, \phi_2, \gamma) - f(0, \phi_2, \gamma)
}
\]
(also in mean). Since we want FH to be reached in finite time for any
precision threshold $c>0$, we need the proportions of visits to
$\mathcal{X}_i$
to converge to $\phi_i$. Hence we want
%
%
\begin{equation}
\label{equpdateconstraint} \frac{f(1, \phi_2, \gamma) - f(0, \phi_1,
\gamma)}{
f(1, \phi_1, \gamma) - f(0, \phi_1, \gamma) +
f(1, \phi_2, \gamma) - f(0, \phi_2, \gamma)
} = \phi_1.
\end{equation}
Using the specific forms of $f(\Ind_{\mathcal{X}_i}(X_t), \phi_i,
\gamma
)$ for
both updates, we can easily see that:
\begin{itemize}
\item update (\ref{eqrightupdate}) satisfies equation
(\ref{equpdateconstraint}) for any $\phi$ and $\gamma$;
\item in general, update (\ref{eqwrongupdate}) does not satisfy equation
(\ref{equpdateconstraint}), except in the special case where
$\phi_1=\phi_2=1/2$.
\end{itemize}

Note that the second point states that the proportions of visits
converge to some vector
$\tilde{\phi}$ different than $\phi$. The vector $\tilde{\phi}$
can be
expressed as a function
of $\phi\dvtx\tilde{\phi} = g(\phi)$. By numerically or analytically
inverting this function $g$,
one can plug $g^{-1}(\phi)$ as an algorithmic parameter, so that
the limiting proportions of\vadjust{\goodbreak} visits converge to $g(g^{-1}(\phi)) = \phi
$. Hence
one can use update (\ref{eqwrongupdate}) and get the desired
proportions of visits
by plugging $g^{-1}(\phi)$ instead of $\phi$ in the update.

The rest of the paper is devoted to the proof that $Z_t/t$ goes to $0$ under
some assumptions. More formally, we state in Theorem \ref{thmconv}
what we
shall prove in the remainder of this section.
This theorem holds for both updates.

%
\begin{theorem}\label{thmconv}
Consider the sequence of penalties $(\theta_t)$ introduced in Algorithm~\ref{algoWLFH}. We define
\[
Z_t = \log\theta_t(1) - \log\theta_t(2).
\]
Then
\[
\frac{Z_t}{t} \mathop{\longrightarrow}^{L_1}_{t\to\infty} 0.
\]
\end{theorem}

As a consequence, the long run proportion of visits to each bin converges
to the desired frequency $\phi$ for update (\ref{eqrightupdate}),
and not necessarily for update (\ref{eqwrongupdate}).
Corollary \ref{corolFH} clarifies the consequence of Theorem \ref{thmconv}
on the validity of Algorithm \ref{algoWLFH}.
%
%
\begin{corollary}\label{corolFH}
When the proportions of visits converge in mean to the desired
proportions, the
flat histogram criterion is reached in finite time for any precision threshold
$c$.
\end{corollary}

We already made the simplification of considering the simple case $d = 2$.
We make the following assumptions:
%
%
\begin{assumption}\label{asemptybins}
The bins are not empty with respect to $\mu$ and $\pi$,
\[
\forall i \in\{1, 2\} \qquad \mu(\mathcal{X}_i) > 0\quad \mbox{and}\quad \pi(
\mathcal{X}_i) > 0.
\]
\end{assumption}
%
%
\begin{assumption}\label{ascompact}
The state space $\mathcal{X}$ is compact.
\end{assumption}
%
%
\begin{assumption}\label{asproposal}
The proposition distribution $Q(x,y)$ is such that
\[
\exists q_{\mathrm{min}} > 0\ \forall x \in\mathcal{X}\ \forall y \in\mathcal{X}\qquad
Q(x,y) > q_{\mathrm{min}}.
\]
\end{assumption}
%
%
\begin{assumption}\label{asmhratio}
The MH acceptance ratio is bounded from both sides
\[
\exists m > 0\ \exists M > 0\ \forall x \in\mathcal{X}\ \forall y \in
\mathcal{X}\qquad  m
< \frac{\pi(y)}{\pi(x)}\frac{Q(y,x)}{Q(x,y)}< M.
\]
\end{assumption}

Assumption \ref{asemptybins} guarantees that the bins are well
designed, and
if it was not verified, the algorithm would never reach FH,
regardless of the
other assumptions. Assumptions \ref{ascompact}--\ref{asmhratio} are, for
example, verified by a Gaussian random walk proposal over a compact
space, where
there is a lower bound on $\pi$. We believe that these assumptions can be
relaxed to cover the most general Wang--Landau\vadjust{\goodbreak} algorithm. Making these four
assumptions allows us to propose a clearer proof, and we propose hints
on how to
relax them in Section~\ref{secdiscussion}.

We denote by $U_t$ the increment of $Z_t$, such that for any $t$,
\[
Z_{t+1} = Z_t + U_t = Z_t + f
\bigl( \Ind_{\mathcal{X}_1}(X_{t}), \phi_1, \gamma\bigr) -
f \bigl( \Ind_{\mathcal{X}_2}(X_t), \phi_2, \gamma\bigr).
\]
Here with only two bins, the
increments $U_t$ can take two different values, $+a$ or $-b$, for some $a>0$
and $b>0$ that depend on $\phi$ and $\gamma$. For example, with update~(\ref{eqrightupdate}),
\[
\cases{ a = 2\gamma(1 - \phi_1) > 0,\vspace*{2pt}
\cr
b
= 2\gamma\phi_1 > 0,} %
\]
whereas with update (\ref{eqwrongupdate}),
\[
\cases{ \displaystyle a = \log\biggl(\frac{1 + \gamma(1 - \phi_1)}{1 - \gamma
(1 - \phi
_1)} \biggr)> 0,\vspace*{2pt}
\cr
\displaystyle b= \log\biggl(\frac{1 + \gamma\phi_1}{1 - \gamma\phi_1} \biggr
)>0, } %
\]
and in both cases, if $X_t\in\mathcal{X}_1$, then $U_t = +a$, otherwise
$U_t = -b$.

We want to prove that $Z_t / t$ goes to $0$,\vspace*{1pt} and we are going to prove a
stronger result that states, in words, that when $Z_t$ leaves a fixed interval
$[\bar{Z}^{\mathrm{lo}}, \bar{Z}^{\mathrm{hi}}]$, it returns to it in a finite time.

\subsection{Behavior of $(Z_t)$ outside an interval}
First, Lemma \ref{lemma1} states that if $Z_t$ goes above a value
$\bar{Z}^{\mathrm{hi}}$, it has a strictly positive probability of starting to decrease,
and that when that happens, it keeps on decreasing with a high probability.

%
\begin{lemma}\label{lemma1}
With the introduced processes $Z_t$ and $U_t$,
there exists $\epsilon> 0$ such that for all $\eta> 0$, there
exists $\bar{Z}^{\mathrm{hi}}$ such that, if $Z_t\geq\bar{Z}^{\mathrm{hi}}$, we have the
following two inequalities:
\begin{eqnarray*}
P[U_{t+1}=-b|U_t=+a,Z_t]&>&\epsilon,
\\
P[U_{t+1}=-b|U_t=-b,Z_t]&>&1-\eta.
\end{eqnarray*}
\end{lemma}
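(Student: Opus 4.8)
The plan is to read off the one-step transition probabilities of the Metropolis--Hastings kernel $K_{\theta_t}$ directly in terms of $Z_t$, and then to invoke Assumptions \ref{as:emptybins}--\ref{as:mhratio}. The crucial observation is that $K_{\theta_t}$ depends on the penalties only through the ratio $\theta_t(1)/\theta_t(2)=e^{Z_t}$: for a within-bin proposal the penalty factors cancel, while for a cross-bin proposal they produce a factor $e^{\pm Z_t}$. Concretely, for $x\in\mathcal{X}_1$ and $y\in\mathcal{X}_2$ the acceptance ratio is $\rho_{\theta_t}(x,y)=1\wedge\left[\frac{\pi(y)}{\pi(x)}\frac{Q(y,x)}{Q(x,y)}\,e^{Z_t}\right]$, whereas for $x\in\mathcal{X}_2$ and $y\in\mathcal{X}_1$ it is $\rho_{\theta_t}(x,y)=1\wedge\left[\frac{\pi(y)}{\pi(x)}\frac{Q(y,x)}{Q(x,y)}\,e^{-Z_t}\right]$. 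Since $U_{t+1}=-b$ is exactly the event $\{X_{t+1}\in\mathcal{X}_2\}$ and $X_{t+1}\sim K_{\theta_t}(X_t,\cdot)$, both inequalities reduce to bounding $K_{\theta_t}(x,\mathcal{X}_2)$ \emph{uniformly} over $x$ in the relevant bin; because these bounds will be uniform in $x$, the conditioning on $(U_t,Z_t)$ causes no difficulty, as we never need the conditional law of $X_t$ inside its bin.

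For the first inequality I would fix $\epsilon$ once and for all as $\epsilon:=q_{min}\,\mu(\mathcal{X}_2)$, which is strictly positive by Assumptions \ref{as:emptybins} and \ref{as:proposal} and does not depend on $\eta$. For $x\in\mathcal{X}_1$ the delta part of the kernel puts no mass on $\mathcal{X}_2$, so $K_{\theta_t}(x,\mathcal{X}_2)=\int_{\mathcal{X}_2}Q(x,y)\rho_{\theta_t}(x,y)\,dy$. By Assumption \ref{as:mhratio} the bracketed quantity above exceeds $m\,e^{Z_t}$, hence $\rho_{\theta_t}(x,y)=1$ as soon as $Z_t\geq\log(1/m)$. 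Using $Q(x,y)>q_{min}$ then gives $K_{\theta_t}(x,\mathcal{X}_2)\geq q_{min}\,\mu(\mathcal{X}_2)=\epsilon$ for every $x\in\mathcal{X}_1$, which yields the first inequality whenever $\bar{Z}^{hi}\geq\log(1/m)$.

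For the second inequality I would instead bound the probability of escaping $\mathcal{X}_2$. For $x\in\mathcal{X}_2$ we have $K_{\theta_t}(x,\mathcal{X}_1)=\int_{\mathcal{X}_1}Q(x,y)\rho_{\theta_t}(x,y)\,dy$, and by Assumption \ref{as:mhratio} the cross-bin acceptance ratio satisfies $\rho_{\theta_t}(x,y)<M\,e^{-Z_t}$. Since $Q(x,\cdot)$ is a probability density, $\int_{\mathcal{X}_1}Q(x,y)\,dy\leq 1$, so $K_{\theta_t}(x,\mathcal{X}_1)<M\,e^{-Z_t}$ uniformly in $x\in\mathcal{X}_2$. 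Because $\mathcal{X}_1,\mathcal{X}_2$ partition $\mathcal{X}$, this gives $K_{\theta_t}(x,\mathcal{X}_2)>1-M\,e^{-Z_t}$, which exceeds $1-\eta$ as soon as $Z_t\geq\log(M/\eta)$. Choosing $\bar{Z}^{hi}:=\max\{\log(1/m),\log(M/\eta)\}$ makes both inequalities hold simultaneously for all $Z_t\geq\bar{Z}^{hi}$; the symmetric statement describing the behaviour of $(Z_t)$ below a level $\bar{Z}^{lo}$ then follows by exchanging the roles of $\mathcal{X}_1$ and $\mathcal{X}_2$.

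I would not expect a genuine obstacle here: once the $e^{\pm Z_t}$ factors in the acceptance ratio are identified, the entire argument is a direct computation using the bounds $m,M,q_{min}$. The only point deserving care is that the bounds must be made uniform over each bin rather than pointwise at the unknown $X_t$, so that conditioning on $U_t$ and $Z_t$ — which fixes only the bin of $X_t$ and the kernel, not $X_t$ itself — is legitimate; this is precisely why I phrase everything through $\inf_{x\in\mathcal{X}_1}K_{\theta_t}(x,\mathcal{X}_2)$ and $\sup_{x\in\mathcal{X}_2}K_{\theta_t}(x,\mathcal{X}_1)$.
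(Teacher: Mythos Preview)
Your proof is correct and follows essentially the same route as the paper: both arguments identify the $e^{\pm Z_t}$ factor in the cross-bin acceptance ratio, use the lower bound $m$ from Assumption~\ref{as:mhratio} to force acceptance of $\mathcal{X}_1\to\mathcal{X}_2$ proposals once $Z_t$ is large, and use the upper bound $M$ to suppress $\mathcal{X}_2\to\mathcal{X}_1$ moves, arriving at the same choice $\epsilon=q_{min}\,\mu(\mathcal{X}_2)$. Your version is slightly more explicit about the thresholds ($\log(1/m)$ and $\log(M/\eta)$ rather than abstract constants $K_1,K_3$) and about the need for uniformity in $x$ over each bin, but the substance is identical.
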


\begin{pf}
We start with the first inequality. Let $q_{\mathrm{min}}$ be like in Assumption~\ref{asproposal}.

In terms of events $\{U_t = +a\}$ is equivalent to $\{X_t \in\mathcal
{X}_1\}$,
by definition. If $X_t \in\mathcal{X}_1$ and $\pi(X_t) > 0$, then
\begin{eqnarray*}
K_{\theta_t}(X_t, \mathcal{X}_2) &=& \int
_{\mathcal{X}_2} K_{\theta_t}(X_t, y) \,dy
\\
&=& \int_{\mathcal{X}_2} Q(X_t, y) \rho_{\theta_t}(X_t,
y) \,dy
\\
&=& \int_{\mathcal{X}_2} Q(X_t, y) \biggl( 1 \wedge
\frac{\pi(y)}{\pi(X_t)}\frac{Q(y,X_t)}{Q(X_t,y)}\frac{\theta_t(J(X_t))}{
\theta_t(J(y)) } \biggr)\,dy
\\
&= &\int_{\mathcal{X}_2} Q(X_t, y) \biggl( 1 \wedge
\frac{\pi(y)}{\pi(X_t)}\frac{Q(y,X_t)}{Q(X_t,y)}e^{Z_t} \biggr)\,dy.
\end{eqnarray*}

Using Assumption \ref{asmhratio}, $\frac{\pi(y)}{\pi(x)}\frac
{Q(y,x)}{Q(x,y)}$
is bounded from below, hence there exists $K_1$ such that
\[
\forall k \geq K_1\ \forall x,y\in\mathcal{X}\qquad \frac{\pi(y)}{\pi(x)}
\frac{Q(y,x)}{Q(x,y)}e^{k} \geq1.
\]
If $Z_t \geq K_1$ and $X_t\in\mathcal{X}_1$, then
\[
K_{\theta_t}(X_t, \mathcal{X}_2) = \int
_{\mathcal{X}_2} Q(X_t, y) \,dy > q_{\mathrm{min}} \mu(
\mathcal{X}_2).
\]

Hence if $Z_t\geq K_1$,
\begin{eqnarray*}
P[U_{t+1}=-b|U_t=+a,Z_t] &=& P[X_{t+1}
\in\mathcal{X}_2 | X_t \in\mathcal{X}_1,
Z_t]
\\
&>& q_{\mathrm{min}} \mu(\mathcal{X}_2).
\end{eqnarray*}

We now prove the second inequality. Let us show that for any $\eta>
0$, there
exists~$K_2$ such that, provided $Z_t > K_2$,
\[
P[U_{t+1}=-b|U_t=-b,Z_t]>1-\eta.
\]
We have
\[
P[U_{t+1}=-b|U_t=-b,Z_t]= P[X_{t+1}
\in\mathcal{X}_2 | X_t \in\mathcal{X}_2,
Z_t].
\]
Again let us first work for a fixed $X_t \in\mathcal{X}_2$.
\begin{eqnarray*}
K_{\theta_t}(X_t, \mathcal{X}_2) &=& 1 -
K_{\theta_t}(X_t, \mathcal{X}_1)
\\
&=& 1 - \biggl[\int_{\mathcal{X}_1} Q(X_t, y)
\rho_{\theta_t}(X_t, y) \,dy \biggr]
\\
&=& 1 - \biggl[\int_{\mathcal{X}_1} Q(X_t, y) \biggl( 1
\wedge\frac{\pi(y)}{\pi(X_t)}\frac{Q(y,X_t)}{Q(X_t,y)}e^{-Z_t}
\biggr)\,dy \biggr].
\end{eqnarray*}
Using the assumption that the MH ratio
$\frac{\pi(y)}{\pi(x)}\frac{Q(y,x)}{Q(x,y)}$ is bounded from above, there
exists $K_2$ such that
\[
\forall k \geq K_2\ \forall x,y\in\mathcal{X}\qquad \frac{\pi(y)}{\pi(x)}
\frac{Q(y,x)}{Q(x,y)}e^{-k} \leq1.
\]
And hence for $Z_t > K_2$,
\begin{eqnarray*}
K_{\theta_t}(X_t, \mathcal{X}_2) &=& 1 -
e^{-Z_t}\int_{\mathcal{X}_1} Q(X_t, y)
\frac{\pi(y)}{\pi(X_t)}\frac{Q(y,X_t)}{Q(X_t,y)}\,dy
\\
& >& 1-e^{-Z_t}\int_{\mathcal{X}_1}Q(X_t, y) M \,dy
\\
& >& 1-Me^{-Z_t},
\end{eqnarray*}
and hence for any $\eta$, there is a $K_3$ greater than $K_2$ such that
for all
$Z_t \geq K_3$,
\[
K_{\theta_t}(X_t, \mathcal{X}_2) > 1 - \eta.
\]
We thus obtain
\[
P[U_{t+1}=-b|U_t=-b,Z_t] > 1 - \eta.
\]

To conclude we finally define $\epsilon= q_{\mathrm{min}} \mu(\mathcal{X}_2)$,
and then
for any $\eta> 0$, by taking any $\bar{Z}^{\mathrm{hi}}$ greater than $K_1
\vee
K_3$, we
have both inequalities.
\end{pf}

Considering the symmetry of the problem, we instantly have the following
corollary result. It states that if $Z_t$ goes too low, it has a strictly
positive probability of starting to increase, and when that happens, it keeps
on increasing with a high probability.
%
%
\begin{lemma}\label{lemma2}
With the introduced processes $Z_t$ and $U_t$,
there exists $\epsilon> 0$ such that for all $\eta> 0$, there
exists $\bar{Z}^{\mathrm{lo}}$ such that, if $Z_t \leq\bar{Z}^{\mathrm{lo}}$, we have the
following two inequalities:
\begin{eqnarray*}
P[U_{t+1}=+a|U_t=-b,Z_t]&>&\epsilon,
\\
P[U_{t+1}=+a|U_t=+a,Z_t]&>&1-\eta.
\end{eqnarray*}
\end{lemma}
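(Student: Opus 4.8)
The plan is to deduce Lemma \ref{lemma2} from Lemma \ref{lemma1} by exploiting the invariance of the whole construction under exchanging the labels of the two bins, and I would first make this symmetry explicit. Relabelling $\mathcal{X}_1 \leftrightarrow \mathcal{X}_2$ replaces the desired frequency $\phi_1$ by $\phi_2 = 1-\phi_1$, sends the penalty ratio $Z_t = \log\theta_t(1) - \log\theta_t(2)$ to $-Z_t$, and exchanges the two possible increments $+a \leftrightarrow -b$. The last point is not automatic but is easily checked from the explicit formulas for $a$ and $b$: under $\phi_1 \mapsto 1-\phi_1$ the value of $a$ becomes the old $b$ and vice versa, for both update (\ref{eq:rightupdate}) and update (\ref{eq:wrongupdate}). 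Since the penalized target $\pi_\theta$ and the kernel $K_\theta$ are mapped to the corresponding objects of the relabelled problem, the one-step transition kernel of the homogeneous chain $(X_t, -Z_t)$ in the original problem coincides with that of $(X_t, Z_t)$ in the relabelled one.

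I would then simply apply Lemma \ref{lemma1} to the relabelled system. The event $Z_t \le \bar{Z}^{lo}$ corresponds to $-Z_t \ge -\bar{Z}^{lo}$, that is, to ``$Z$ large'' in the relabelled problem, where Lemma \ref{lemma1} provides the two inequalities with the roles of $+a$ and $-b$ exchanged. Reading these back in the original variables yields exactly the two claimed bounds, with the choice $\bar{Z}^{lo} = -\bar{Z}^{hi}$ and a constant $\epsilon = q_{min}\,\mu(\mathcal{X}_1) > 0$ (one may take a common $\epsilon = q_{min}\,\min(\mu(\mathcal{X}_1),\mu(\mathcal{X}_2))$ for both lemmas if desired).

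As a fully self-contained alternative, I would rerun the computation of Lemma \ref{lemma1} with $\mathcal{X}_1$ and $\mathcal{X}_2$ interchanged. For the first inequality I would compute $K_{\theta_t}(X_t,\mathcal{X}_1)$ for fixed $X_t \in \mathcal{X}_2$; the relevant penalty ratio is now $e^{-Z_t}$, so once $Z_t$ is sufficiently negative the lower bound $m$ of Assumption \ref{as:mhratio} forces the acceptance probability to equal $1$, leaving $K_{\theta_t}(X_t,\mathcal{X}_1) > q_{min}\,\mu(\mathcal{X}_1)$ uniformly in $X_t \in \mathcal{X}_2$. For the second inequality I would bound $K_{\theta_t}(X_t,\mathcal{X}_1) = 1 - K_{\theta_t}(X_t,\mathcal{X}_2)$ for $X_t \in \mathcal{X}_1$ using the upper bound $M$, obtaining $K_{\theta_t}(X_t,\mathcal{X}_1) > 1 - M e^{Z_t}$, which exceeds $1-\eta$ as soon as $Z_t$ is small enough that $M e^{Z_t} < \eta$.

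The only point that genuinely needs care — and the reason the symmetry is clean rather than accidental — is that Assumption \ref{as:mhratio} is \emph{two-sided}: the first half of Lemma \ref{lemma1} uses the lower bound $m$ and the second half the upper bound $M$, so after the relabelling (which effectively sends the MH ratio to its reciprocal) one still has both bounds available to play the mirror roles. I would therefore verify carefully that $m$ and $M$ transfer correctly under the swap, as well as the correspondence $a \leftrightarrow b$ noted above; everything else is a verbatim mirror of the Lemma \ref{lemma1} argument.
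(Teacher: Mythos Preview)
Your proposal is correct and matches the paper's own argument, which simply states that Lemma \ref{lemma2} follows ``considering the symmetry of the problem'' without further detail. Your explicit verification of the $\phi_1 \leftrightarrow 1-\phi_1$, $Z_t \leftrightarrow -Z_t$, $a \leftrightarrow b$ correspondence and the self-contained rerun of the computation are more than the paper provides, but the approach is the same.
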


\subsection{A new process that bounds $(Z_t)$ outside the set}

In this section, the proof introduces a new sequence of increments
$\tilde U_t$ that bounds
$U_t$, and such that the sequence $\tilde{Z}_t$ using $\tilde U_t$ as
increments,
\[
\tilde{Z}_{t+1} = \tilde{Z}_t + \tilde U_t
\]
returns to $[\bar{Z}^{\mathrm{lo}}, \bar{Z}^{\mathrm{hi}}]$ in a finite time whenever
it leaves
it. It will imply that $Z_t$ also returns to $[\bar{Z}^{\mathrm{lo}}, \bar
{Z}^{\mathrm{hi}}]$ in
finite time whenever it leaves it. Figure \ref{figZandZtilde} might
help to
visualize the proof.

%
\begin{figure}

\includegraphics{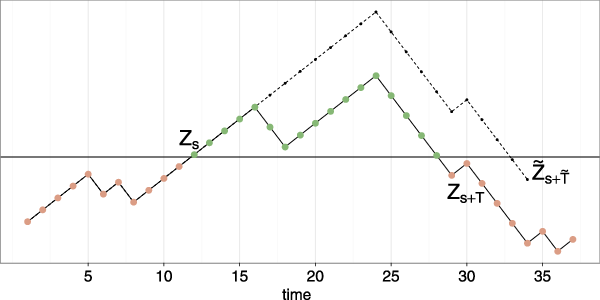}

\caption{Trajectory of $Z$ (full line with
dots) and
of $\tilde{Z}$ (dotted line), when these processes go above some level
$\bar Z^{\mathrm{hi}}$ indicated by a horizontal full line. $Z$ goes above the level
at time $s$, and returns below it at time $s + T$, whereas $\tilde{Z}$
stays above the level until time $s + \tilde T$, with $T\leq\tilde
T$.}\label{figZandZtilde}
\end{figure}

First let us use Lemma \ref{lemma1}. We can take $\epsilon<1/2$ and
$\eta<\min(1/2,\epsilon b/a)$. The lemma gives the existence of an
integer $K$
such that if $Z_t\geq K$, we have the following two inequalities:
%
%
\begin{eqnarray}
P[U_{t+1}=-b|U_t=+a,Z_t]&>&\epsilon,
\label{epsilon}
\\
P[U_{t+1}=-b|U_t=-b,Z_t]&>&1-\eta.\label{eta}
\end{eqnarray}

Suppose that there is some time $s$ such that $Z_{s-1} \leq K$ and
$Z_s\geq K$.
Note that necessarily $Z_s\in[K,K+a]$. Then we define $\tilde{Z}_s =
Z_s$, a
new process starting at time $s$. Let $s+T$ be the first time after $s$ such
that $Z_{s+T}\leq K$. We wish to show that $E[T]<\infty$.

We define the sequence of random variables $ (Z_t )_{t\geq s}$ defined
by $\tilde{Z}_s = Z_s$ and $\tilde{Z}_{t+1} = \tilde{Z}_t + \tilde U_t$
for $t>s$,
where $ (\tilde U_t )_{t\geq s}$ is a sequence of random
variables taking
the values $+a$ or $-b$.

For $s\leq t< T$, $\tilde U_t$ is defined as follows:
\begin{itemize}
\item if $U_{t+1}=+a$, then $\tilde U_{t+1}=+a$;
\item if $U_{t+1}=-b$, $U_t=-b$ and $\tilde U_t=-b$, then $\tilde
U_{t+1}=-b$ with probability
$p_1=(1-\eta)/P[U_{t+1}=-b|U_t=-b,Z_t]$ and $\tilde U_{t+1}=+a$ otherwise;
\item if $U_{t+1}=-b$, $U_t=+a$ and $\tilde U_t=+a$, then $\tilde
U_{t+1}=-b$ with
probability $p_2=\epsilon/P[U_{t+1}=-b|U_t=+a, Z_t]$ and $\tilde
U_{t+1}=+a$ otherwise;
\item if $U_{t+1}=-b$, $U_t=-b$ and $\tilde U_t=+a$, then $\tilde
U_{t+1}=-b$ with probability
$p_3=\epsilon(1+P[U_{t+1}=+a|U_t=-b,Z_t]/P[U_{t+1}=-b|U_t=-b,Z_t])$
and $\tilde U_{t+1}=+a$
otherwise.
\end{itemize}

For times $t\geq T$, $\tilde U_t$ is a Markov chain independent of
$U_t$ and $Z_t$,
with transition matrix
\[
\pmatrix{ 1-\epsilon& \epsilon
\vspace*{2pt}\cr
\eta& 1-\eta},
\]
where the first state corresponds to $+a$ and the second state to
$-b$.

First, let us check that all these probabilities are indeed less than 1.
For $p_1$, it follows from inequality (\ref{eta}).
For $p_2$, it follows from inequality (\ref{epsilon}).
For $p_3$, we have
\[
\epsilon\biggl(1+\frac
{P[U_{t+1}=+a|U_t=-b,Z_t]}{P[U_{t+1}=-b|U_t=-b,Z_t]} \biggr) \leq
\epsilon\biggl(1+
\frac{\eta}{1-\eta} \biggr)\leq2\epsilon\leq1,
\]
where we used the conditions $\eta<1/2$ and $\epsilon<1/2$. Hence
$(\tilde U_t)$ is
well defined.

%
\begin{lemma}\label{lemmautilde}
$(\tilde U_t)$ is a Markov chain over the space $\{+a,-b\}$ with
transition matrix
\[
\pmatrix{ 1-\epsilon& \epsilon
\vspace*{2pt}\cr
\eta& 1-\eta},
\]
where the first state corresponds to $\{+a\}$ and the second state to
\{$-b\}$.
\end{lemma}

\begin{pf}
We only need to check this for times $t\leq T$.
The events $\{\tilde U_t=-b\}$ and $\{\tilde U_t=-b,U_t=-b\}$ are
identical, hence
\begin{eqnarray*}
P[\tilde U_{t+1}=-b|\tilde U_t=-b, Z_t]&=&P[
\tilde U_{t+1}=-b|\tilde U_t=-b,U_t=-b,Z_t]
\\
&=&P[\tilde U_{t+1}=-b|U_{t+1}=-b,\tilde U_t=-b,U_t=-b,Z_t]
\\
& &{}\times P[U_{t+1}=-b|\tilde U_t=-b,U_t=-b,Z_t]
\\
&=&\frac{(1-\eta)P[U_{t+1}=-b|U_t=-b,Z_t]}{P[U_{t+1}=-b|U_t=-b,Z_t]}
\\
&=&1-\eta.
\end{eqnarray*}
Note that this does not depend on $Z_t$.

Similarly,
\begin{eqnarray*}
&&P[\tilde U_{t+1}=-b|\tilde U_t=+a, U_t=+a,Z_t]
\\
&&\qquad=P[\tilde U_{t+1}=-b|U_{t+1}=-b,\tilde U_t=+a,
U_t=+a,Z_t]
\\
&&\qquad\quad{} \times P[U_{t+1}=-b|\tilde U_t=+a,U_t=+a,Z_t]
\\
&&\qquad= \frac{\epsilon P[U_{t+1}=-b|U_t=+a,Z_t]}{P[U_{t+1}=-b|U_t=+a,
Z_t]}
\\
&&\qquad=\epsilon
\end{eqnarray*}
and
\begin{eqnarray*}
&& P[\tilde U_{t+1}=-b|\tilde U_t=+a, U_t=-b,Z_t]
\\
&&\qquad=P[\tilde U_{t+1}=-b|U_{t+1}=-b,\tilde U_t=+a,
U_t=-b,Z_t]
\\
&&\qquad\quad{} \times P[U_{t+1}=-b|\tilde U_t=+a,U_t=-b,Z_t]
\\
&&\qquad= \epsilon\biggl(1+\frac
{P[U_{t+1}=+a|U_t=-b,Z_t]}{P[U_{t+1}=-b|U_t=-b,Z_t]} \biggr)
\\
&&\qquad\quad{} \times P[U_{t+1}=-b|U_t=-b,Z_t]
\\
&&\qquad=\epsilon\bigl(P[U_{t+1}=-b|U_t=-b,Z_t] +
P[U_{t+1}=+a|U_t=-b,Z_t] \bigr)
\\
&&\qquad=\epsilon.
\end{eqnarray*}

These last two calculations result in
\[
P[\tilde U_{t+1}=-b|\tilde U_t=+a]=\epsilon
\]
with no dependence on $Z_t$ (or $U_t$).
\end{pf}

The previous lemma is central to the proof, and especially the lack of
dependence on $Z_t$. We always have $\tilde U_s = +a$, since $U_s =
+a$. Hence for
each $t\geq s$, the distribution of $\tilde U_t$ depends only on $\eta
$ and
$\epsilon$, and implicitly on the threshold $K$, but not on the value of
$Z_s$. Hence $(\tilde U_t)$ has the same law, every time the process
$(Z_t)$ goes
above $K$.

\subsection{\texorpdfstring{Conclusion: Proof of Theorem \protect\ref{thmconv} and Corollary \protect\ref{corolFH}}
{Conclusion: Proof of Theorem 1 and Corollary 2}}

Let us now use the bounding process $(\tilde{Z}_t)$ to control the time
spent by $(Z_t)$ above $K$.

%
\begin{lemma}\label{lemmafiniteexpect}
There exists $\tau\in\mathbb{R}$ such that, for all times $s$ such that
$Z_{s-1} \leq K$ and $Z_s \geq K$, and defining $T$ by $T = \inf_{d
\geq
0}\{Z_{s+d} \leq K\}$, then
\[
\mathbb{E}[T] \leq\tau.
\]
\end{lemma}

\begin{pf}
The Markov chain $(\tilde U_t)$ admits the following stationary distribution:
\[
\pi_{\tilde U} = \biggl(\frac{\eta}{\epsilon+\eta},\frac
{\epsilon
}{\epsilon+\eta} \biggr).
\]
Let us denote by $\tilde T$ the time spent by $(\tilde{Z}_t)$ over $K$,
that is,
\[
\tilde T = \inf_{d \geq0} \Biggl\{\sum
_{t=s+1}^ {s+d}\tilde U_t\leq-a \Biggr\}.
\]
Remember that $\tilde{Z}_s = Z_s \in[K, K+a]$, hence $\tilde
{Z}_{s+\tilde
T} \leq K$ (whatever the value of $Z_s$). Now, our choice of $\eta$
results in $a\eta<b\epsilon$ which implies $E[\tilde T]<\infty$
[\citet{Norris1998}]. Let $\tau=E[\tilde T]$. Note that since the law of
$(\tilde U_t)$ does not depend on the value of $Z_s$, $\tau$ does not
depend on
$Z_s$.

Since, for $t\leq T$, we impose that ``if $U_{t+1}=+a$, then $\tilde
U_{t+1}=+a$,''
it follows that $\forall t\leq T, U_t\leq\tilde U_t$. Consequently
$\forall
t\leq T, Z_t\leq\tilde Z_t$ and hence $T\leq\tilde T$. Note that (the
distribution of) $T$ depends on the exact value of $Z_s$, but that
$\tilde
T$ as we have defined it has a fixed distribution. We have $E[T]\leq
\tau$
(whatever the value $Z_s$).
\end{pf}

\begin{pf*}{Proof of Theorem \ref{thmconv}}
Let us define the following sequence
of indices:
\begin{eqnarray*}
S_1 &=& \inf_{s \geq0}\{Z_{s-1} \leq K \mbox{
and } Z_s \geq K\};\\
 S_k &=& \inf_{s \geq S_{k-1}}
\{Z_{s-1} \leq K \mbox{ and } Z_s \geq K\}.
\end{eqnarray*}
The sequence $(S_k)$ represents the times at which the process $(Z_t)$
goes above $K$.
Moreover let us introduce the sequence of time spent above $K$,
\[
T_k = \inf_{s \geq0}\{Z_{S_{k} + s - 1} \geq K \mbox{
and } Z_{S_{k} +
s} \leq K\}.
\]
We have $Z_{S_k}\in[K,K+a]$. Define $k(t)$ such that
$S_{k(t)} \leq t < S_{k(t)+1}$.
Either $Z_t \leq K$ or $Z_t > K$. In the latter case, $Z_t \leq
Z_{S_{k(t)}} + a T_{k(t)}$. Clearly, in any case,
%
%
\begin{equation}
\mathbb{E}[Z_t] \leq(K+a) + a\tau.\label{upperbound}
\end{equation}
A similar reasoning on the lower bound leads to $K'$ and $\tau
'<\infty$ such that
%
%
\begin{equation}
\mathbb{E}[Z_t] \geq\bigl(K'- b \bigr) - b
\tau'.\label{lowerbound}
\end{equation}
Inequalities (\ref{lowerbound}) and (\ref{upperbound}) imply
\[
\mathbb{E} \biggl[\frac{Z_t}{t} \biggr]\to0.
\]
\upqed\end{pf*}

As stated at the beginning of the section,
for update (\ref{eqrightupdate})
the convergence $Z_t/t\to0$ (in mean) implies
the convergence of the proportions $(\nu_t / t)$
to $\phi$ (also in mean).
We now show that this ensures that the flat
histogram is reached in finite time.
\begin{pf*}{Proof of Corollary \ref{corolFH}}
For a fixed threshold $c$, recall that FH being reached at time $t$
corresponds to the event
\[
\mathrm{FH}_t = \biggl\{\forall i \in\{1, \ldots, d\}\ \biggl
\llvert
\frac{\nu_t(i)}{t} - \phi_i \biggr\rrvert< c \biggr\}.
\]
We will only use the convergence in probability of the proportions
to $\phi$ for all $i$
\[
\frac{\nu_t(i)}{t} \mathop{\longrightarrow}^{\mathbb{P}}_{t\to\infty} \phi_i,
\]
which implies
\[
\forall\varepsilon> 0\ \exists N \in\mathbb{N}\ \forall t \geq N\qquad
\mathbb{P}(\mathrm{FH}_t) \geq1 - \varepsilon.
\]
We can hence define a stopping time $T^{\mathrm{FH}}$ corresponding
to the first FH being reached,
\[
T^{\mathrm{FH}} = \inf_{t \geq0}\{\mathrm{FH}_t\}
\]
and some $\varepsilon> 0$ such that
\[
\exists N \in\mathbb{N}\ \forall n \geq N \qquad \mathbb{P} \bigl
(T^{\mathrm
{FH}} \leq
N + n \bigr) \geq\varepsilon.
\]
Using Lemma 10.11 of \cite{williams}, the expectation of $T^{\mathrm
{FH}}$ is then finite.
\end{pf*}

\section{\texorpdfstring{Proof when $d\geq2$}{Proof when d >= 2}}\label{secproofanyd}

In this section we extend the proof to the more general case $d \geq2$.
Having proved that for $d = 2$, only update (\ref{eqrightupdate}) is
valid, we
now focus on this update and omit update (\ref{eqwrongupdate}).

We consider the log penalties defined for update (\ref{eqrightupdate}) by
\[
\log\theta_t(i) = \nu_t(i) (1 - \phi_i) -
\bigl(t - \nu_t(i) \bigr) \phi_i = \nu_t(i) -
t\phi_i,
\]
where $\nu_t(i)$ is the number of visits of $(X_t)$ in
$\mathcal{X}_i$. We assume without loss of generality that $\log
\theta
_0 = 0$. Then
$(X_t, \log\theta_t)$ is a Markov chain, by definition of the WL
algorithm. We first
prove that $(X_t, \log\theta_t)$ is $\lambda$-irreducible, for a
sigma-finite measure
$\lambda$. We will require the following additional assumption on
the desired frequencies $\phi$.
%
%
\begin{assumption}{\label{asrational}}
The desired frequencies are rational numbers,
\[
\phi= (\phi_1, \ldots, \phi_d) \in
\mathbb{Q}^d.
\]
\end{assumption}

%
\begin{lemma}\label{lemmairreducibility}
Let $\Theta$ be the following subset of $\mathbb{R}^d$:
\[
\Theta= \Biggl\{z\in\mathbb{R}^d \dvtx\exists(n_1,
\ldots, n_d) \in\mathbb{N}^d z_i =
n_i - \phi_i S_n \mbox{ where }
S_n = \sum_{j=1}^d
n_j \Biggr\}.
\]
Then denoting by $\lambda$ the product of the Lebesgue measure $\mu$ on
$\mathcal{X}$ and of the counting measure on $\Theta$, $(X_t, \log
\theta_t)$ is
$\lambda$-irreducible.
\end{lemma}

\begin{pf}
The proof essentially comes from B\'ezout's lemma, and is detailed in the
\hyperref[app]{Appendix}. Note, however, that it relies on Assumption
\ref{asrational}, that was not required for the case $d = 2$.
Although not a very satisfying assumption, which
is likely not to be necessary for proving the occurrence of FH in finite
time, it seems to be necessary for the irreducibility of $(X_t, \log
\theta_t)$,
at least with respect to a standard sigma-finite measure. In
any case, this assumption is not restrictive in practice.
\end{pf}

Since this chain is $\lambda$-irreducible, the proportion of visits to any
$\lambda$-measura\-ble set of $\mathcal{X}\times\Theta$ converges to a
limit in
$[0,1]$. This implies that the vector $(\nu_t(i)/t)$ converges to some vector
$(p_i)$. The following is a \textit{reductio ad absurdum}.

Suppose that for some $i\in\{1,\ldots,d\}$, $p_i\neq\phi_i$. Since
the vectors
$p$ and $\phi$ both sum to $1$, this means that for some $i$,
$p_i<\phi_i$:
such a state $i$ is visited less than the desired frequency.\vadjust{\goodbreak}

Let $\{i_1,i_2,\ldots\} = \operatorname{argmin}_{1\leq j\leq d}(p_j
-\phi_j)$.
Then for
any $i_k$ and for $j \notin\{i_1,\break i_2,\ldots\}$, we have
\[
Z_t^{j, i_k}=-\nu_t(i_k) +
\nu_t(j) + t(\phi_{i_k}-\phi_j) \sim
t(-p_{i_k}+\phi_{i_k}+p_j-\phi_j)\to
\infty.
\]
This implies
\[
\forall K>0\ \exists T \in\mathbb{N}\ \forall t>T\qquad  Z_t^{j,i_k}>K.
\]

Now consider the stochastic process $(U_t)$ such that:
\begin{itemize}
\item$U_t=-b$ if $J(X_t)\in\{i_1,i_2,\ldots\}$;
\item$U_t=+a$ otherwise,
\end{itemize}
for some real numbers $a$ and $b$. Recall that the function $J$ is such
that if
$X_t\in\mathcal X_i$, then $J(X_t)=i$.

Let $\epsilon$ be such that when
$X_t\notin\mathcal{X}_{i_1}\cup\mathcal{X}_{i_2}\cup\cdots$,
there is
probability at least $\epsilon$ of proposing in
$\mathcal{X}_{i_1}\cup\mathcal{X}_{i_2}\cup\cdots$. For large enough
$K$, these
proposals will always be accepted. As before, for large enough $K$, we can
make the probability $\eta$ of leaving
$\mathcal{X}_{i_1}\cup\mathcal{X}_{i_2}\cup\cdots$ as small as we wish.

Using the exact same reasoning as in Section \ref{secdequals2}, we can
construct a process $(\tilde U_t)$
which is a Markov chain with transition matrix
\[
\pmatrix{ 1-\epsilon& \epsilon
\vspace*{2pt}\cr
\eta& 1-\eta}
\]
and with $U_t < \tilde U_t$ almost surely. Therefore for $t>T$, $(U_t)$
decreases on
average, hence $(Z_t^{j,i_k})$ decreases on average, which contradicts the
assumption that it goes to infinity. Hence for all $i$, $p_i = \phi_i$.

\section{\texorpdfstring{Illustration of Theorem \protect\ref{thmconv} on a toy example}
{Illustration of Theorem 1 on a toy example}}\label{sectoyexample}

Let us show the consequences of Theorem \ref{thmconv} on a simple
example. We
consider as the target distribution the standard normal distribution truncated
to the set $\mathcal X=[-10,10]$. We use a Gaussian random walk
proposal, with
unit standard deviation. Finally we arbitrarily split the state space in
$\mathcal{X}_1 = [-10,0]$ and $\mathcal{X}_2 = ]0,10]$, and we set
the desired
frequencies to be $\phi= (0.75, 0.25)$. Figure \ref{figtoyexample}
shows the
results of the Wang--Landau algorithm. Using update (\ref
{eqrightupdate}) and
200,000 iterations, we obtain the histogram of Figure \ref{figtoyexample}(a).
Figure \ref{figtoyexample}(b) shows the convergence of the
proportions of visits to each bin, using update (\ref{eqrightupdate}). The
dotted horizontal lines indicate $\phi$, and we can check that the observed
proportions of visits converge toward it.

Figure \ref{figtoyexample}(c) shows a similar plot, this
time using
update (\ref{eqwrongupdate}). Again, the desired frequencies are represented
by dotted lines. Using the left-hand side of equation
(\ref{equpdateconstraint}), we can calculate the theoretical limit of the
observed proportion of visits in each bin, which for $\gamma= 1$ and
$\phi=
(0.75, 0.25)$, is approximately equal to $(0.79, 0.21)$. Hence for a precision
threshold $c$ equal to, for example, $1\%$, the occurrence of FH is
not likely to
occur if one uses update~(\ref{eqwrongupdate}).

As expected, update (\ref{eqrightupdate}) leads to convergence to the desired
frequencies, but update (\ref{eqwrongupdate}) does not.

%
\begin{figure}
\centering
\begin{tabular}{@{\hspace{-5pt}}c@{\hspace*{4pt}}c@{\hspace*{4pt}}c@{}}

\includegraphics[scale=0.95]{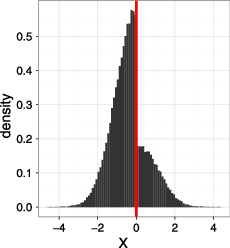}
 & \includegraphics[scale=0.95]{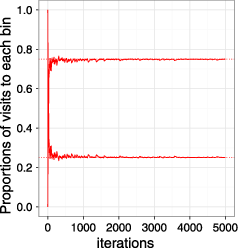} & \includegraphics[scale=0.95]{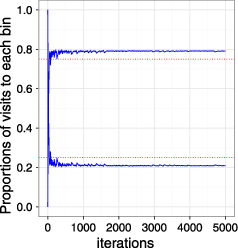}\\
\footnotesize{(a) Histogram of the} & \footnotesize{(b) Convergence
of the} & \footnotesize{(c) Convergence of the}\\
\footnotesize{generated sample} & \footnotesize{proportions
of visits to each bin,} & \footnotesize{proportions of visits to each
bin}\\
 & \footnotesize{using the right
update} & \footnotesize{using
the wrong update}
\end{tabular}
\caption{Results of the Wang--Landau algorithm using
two different updates of the penalties. Histogram of the generated sample
using update (\protect\ref{eqrightupdate}), with a vertical line
showing the
binning (left). Convergence of the proportions of visits to each bin,
using update (\protect\ref{eqrightupdate}) (middle) and using update
(\protect\ref{eqwrongupdate}) (right). The dotted horizontal lines
represent the
desired frequencies.}\label{figtoyexample}
\end{figure}

%

\section{Discussion}\label{secdiscussion}

As seen in Theorem \ref{thmconv} and Corollary \ref{corolFH} of
Section \ref{secdequals2}, update~(\ref{eqrightupdate}) is valid, in the sense that the frequencies of
visits of
the chain $(X_t)$ converges toward $\phi$. Consequently FH is met
in finite time, for any threshold $c > 0$.

Regarding the proof of Theorem \ref{thmconv} in the case $d > 2$, we assume
that the desired frequencies $\phi$ are rationals (Assumption \ref
{asrational}), which allows to prove that
the Markov chain generated by the algorithm $(X_t, Z_t)$ is
$\lambda$-irreducible for some sigma-finite measure $\lambda$.
However, our
proof requires mainly that the proportions of visits of $(X_t)$ to any bin
$\mathcal{X}_i$ converge, which is equivalent to the convergence of
$(Z_t /
t)$. We believe that results on random walks in random environments
[\citet{zeitouni2006random}] would allow us to remove the rationality
assumption.

Assumptions \ref{ascompact}--\ref{asmhratio} could be relaxed by using
the well-known properties of the Metropolis--Hastings algorithm, from
which we
did not take advantage here. More precisely, note that the Wang--Landau
transition kernel differs from the Metropolis--Hastings only when the proposed
points, generated through $Q(\cdot,\cdot)$, land in a different bin
than the
current position of the chain. Otherwise, the kernel behaves like a
Metropolis--Hastings targeting $\pi$. Hence under some weaker
assumptions than
the one we have formulated here, it has recurrence properties.

To conclude, we have shown that for fixed $\gamma$, the Flat Histogram
criterion is reached in finite time for certain updates. For other
updates, the
observed frequencies do not converge to the desired frequencies, and so there
is a nonzero probability that the flat histogram criterion will never be
verified. Note that we do not make any claims about the distribution of the
sample inside each of the bins $\mathcal{X}_i$ at fixed $\gamma$.

\begin{appendix}\label{app}

\section*{\texorpdfstring{Appendix: Proof of Lemma \lowercase{\protect\ref{lemmairreducibility}}}
{Appendix: Proof of Lemma 7}}

Let $\Theta\subset\mathbb{R}^d$ be the set of possibly reachable
values of
the process $(\log\theta_t)$. We define it by
\[
\Theta= \Biggl\{z\in\mathbb{R}^d \dvtx\exists(n_1,
\ldots, n_d) \in\mathbb{N}^d z_i =
n_i - \phi_i S_n \mbox{ where }
S_n = \sum_{j=1}^d
n_j \Biggr\}.
\]
We want to prove the existence of a measure $\lambda$ on
$\mathcal{X}\times\Theta$ such that the Markov chain $(X_t, \log
\theta
_t)$ is
$\lambda$-irreducible. Denote by $\mu$ the Lebesgue measure on
$\mathcal{X}$,
and let $A\in\mathcal{B}(\mathcal{X})$ such that $\mu(A) > 0$, and let
$z^\star
\in\Theta$. Let us show that for any time $s$ at which $X_s = x_s
\in\mathcal{X}$ and $\log\theta_s = z_s\in\Theta$, there exists
$t > 0$
such that
$X_{s+t} \in A$ and $\log\theta_{s+t} = z^\star$ with strictly positive
probability.
This will prove the $\lambda$-irreducibility of $(X_t, \log\theta_t)$
where $\lambda$ is
the product of the Lebesgue measure $\mu$ on $\mathcal{X}$ and the counting
measure on $\Theta$.

Note first that for any $n = (n_1, \ldots, n_d) \in\mathbb{N}^d$,
the process
$(X_t)$ can visit exactly $n_i$ times each set $\mathcal{X}_i$ (for
all $i$)
between some time $s+1$ and some time $s + \sum_{i=1}^d n_i$, since
there is
always a nonzero probability of $X_{t+1}$ visiting any $\mathcal{X}_i$ given
$X_t$ and $\log\theta_t$ (using Assumptions \ref{asproposal} on the proposal
distribution and the form of the MH kernel). More formally, given any
$n\in
\mathbb{N}^d$ and any time $s$, denoting $S_n = \sum_{i=1}^d n_i$,
%
%
\begin{equation}
\label{eqpossiblepath} \mathbb{P} \Biggl( \forall i \in\{1, \ldots, d\} \sum
_{t =
s+1}^{s+S_n} \Ind_{\mathcal{X}_i}(X_{t})
= n_i \bigg\vert X_s, \log\theta_s \Biggr) >
0.
\end{equation}

Furthermore since $\mu(A) > 0$ and since $(\mathcal{X}_i)_{i=1}^d$ is a
partition of $\mathcal{X}$ (satisfying Assumption \ref{asemptybins} on
nonempty bins), there exists $B\subset A$ such that $B\subset
\mathcal{X}_{i^\star}$ for some $i^\star\in\{1, \ldots, d\}$ and
$\mu
(B) > 0$.
We are going to prove the following statement, which means that there
is a
``path'' between any pair of points in $\Theta$:
%
%
\begin{lemma}\label{eqsubirreducibility}
\[
\forall z^{1}, z^{2} \in\Theta\ \exists n\in
\mathbb{N}^d\ \forall i \in\{1, \ldots, d\}\qquad z_i^{1}
+ n_i - \Biggl(\sum_{j=1}^d
n_j \Biggr) \phi_i = z_i^{2}.
\]
\end{lemma}
Then we will conclude as follows: the Markov chain can go from any
$(x_s, z_s)$
to some $(x_{s + t-1}, z_{s + t-1})$ where $z_{s + t-1}$ can be
anywhere in
$\Theta$, and then in one final step to $(x_{s+t}, z_{s+t})$ such that
$x_{s+t}\in B$ and $z_{s+t} = z^\star$, since $z_{s + t - 1}$ can be chosen
such that $z_{s+t} = z^\star$ when $x_{s+t}\in B\subset\mathcal
{X}_{i^\star}$.

\begin{pf}
The structure of the proof is the
following: we prove that $(\log\theta_t)$ can go from $0$ to $0$, then
from any
$z\in\Theta$ to $0$, and the possibility of going from $0$ to any $z
\in
\Theta$ comes from the definition of $\Theta$.

Suppose that $\log\theta_0= (0, \ldots, 0)$, and let us prove that
the process
$(\log\theta_t)$ can go back to $0$, that is, let us find a vector $n
= (n_1, \ldots, n_2)
\in\mathbb{N}^d$ such that
\[
\forall i \in\{1, \ldots, d\}\qquad 0 = n_i - \phi_i
S_n \qquad\mbox{where } S_n =\sum
_{j=1}^d n_j.
\]
Under the rationality assumption on $\phi$ (Assumption \ref
{asrational}), there exists $(a_i, \ldots,\break
a_d)\in\mathbb{N}^d$ and $b\in\mathbb{N}$ such that $\phi_i = a_i
/ b$ for
all $i$. Now define $n\in\mathbb{N}^d$ as follows:
\[
\forall i\in\{1,\ldots, d\}\qquad n_i = k \prod
_{j= 1, j\neq i}^d \frac{1}{a_j},
\]
where $k\in\mathbb{N}$ is such that $n_i\in\mathbb{N}$ for all $i$. Then
using $\sum_{j=1}^d \phi_j = 1$ one can readily check that
\[
\forall i\in\{1,\ldots, d\} \qquad n_i - \phi_i \Biggl( \sum
_{j=1}^d n_j \Biggr) = 0.
\]
Hence the vector $n$ defines a possible path for $(\log\theta_t)$
between $0$ and
$0$, in $S_n = \sum_{j=1}^d n_j$ steps, with a strictly positive
probability [using equation (\ref{eqpossiblepath})].

A similar reasoning allows us to find a possible path from any $z\in
\Theta$ to
$0$. For such a $z\in\Theta$, there exists $(m_1,\ldots,m_d)\in
\mathbb{N}^d$ such that
%
%
\begin{equation}
\label{defm} \forall i\in\{1,\ldots, d\}\qquad z_i = m_i -
S_m a_i/b \qquad \mbox{where } S_m =\sum
_{j=1}^d m_j.
\end{equation}
We wish to show that there exits $(k_1,\ldots,k_d)\in\mathbb{N}^d$ such
that $k_i - S_k a_i/b = -z_i$ for all $i$, where $S_k=\sum_{j=1}^d k_j$.
To construct $(k_1,\ldots,k_d)$, we use the already introduced vector
$(n_1,\ldots,n_d)$ such that $n_i - S_n a_i/b = 0$ for all $i$, where
$S_n=\sum_{j=1}^d n_j$. Putting this together with (\ref{defm}), we get
for any $C\in\mathbb{N}$,
%
%
\begin{equation}
-z_i + C*0 = -m_i +C*n_i -
\frac{a_i}{b}(CS_n -S_m).
\end{equation}

For $C$ large enough, for all $i$, $Cn_i -m_i > 0$. We simply take
$k_i = C
n_i-m_i$ for all $i$. This proves that starting from a point $z \in
\Theta$ (by definition reachable from $0$), $(\log\theta_t)$ can reach
$0$ again.
\end{pf}
\end{appendix}


\section*{Acknowledgments}
The authors thank Luke Bornn, Arnaud Doucet, Anthony Lee, \'Eric
Moulines, Christian P.
Robert and an anonymous reviewer for helpful comments.
Both authors contributed equally to this work.

%

%


\printaddresses


\begin{thebibliography}{21}

\bibitem[\protect\citeauthoryear{Andrieu, Moulines and
Priouret}{2005}]{Andrieu2006b}
%
\begin{barticle}[mr]
\bauthor{\bsnm{Andrieu},~\bfnm{Christophe}\binits{C.}},
\bauthor{\bsnm{Moulines},~\bfnm{{\'E}ric}\binits{{\'E}.}} \AND
\bauthor{\bsnm{Priouret},~\bfnm{Pierre}\binits{P.}}
(\byear{2005}).
\btitle{Stability of stochastic approximation under verifiable conditions}.
\bjournal{SIAM J. Control Optim.}
\bvolume{44}
\bpages{283--312}.%
\bid{doi={10.1137/S0363012902417267}, issn={0363-0129}, mr={2177157}}%
\bptnote{check year}%
\bptok{imsref}%
\end{barticle}%
%
\endbibitem

\bibitem[\protect\citeauthoryear{Andrieu and Thoms}{2008}]{Andrieu2008b}
%
\begin{barticle}[mr]
\bauthor{\bsnm{Andrieu},~\bfnm{Christophe}\binits{C.}} \AND
\bauthor{\bsnm{Thoms},~\bfnm{Johannes}\binits{J.}}
(\byear{2008}).
\btitle{A tutorial on adaptive {MCMC}}.
\bjournal{Stat. Comput.}
\bvolume{18}
\bpages{343--373}.
\bid{doi={10.1007/s11222-008-9110-y}, issn={0960-3174}, mr={2461882}}
\bptok{imsref}%
\end{barticle}
%
\endbibitem

\bibitem[\protect\citeauthoryear{Atchad{\'e} and Liu}{2010}]{Atchade2010a}
%
\begin{barticle}[mr]
\bauthor{\bsnm{Atchad{\'e}},~\bfnm{Yves~F.}\binits{Y.~F.}} \AND
\bauthor{\bsnm{Liu},~\bfnm{Jun~S.}\binits{J.~S.}}
(\byear{2010}).
\btitle{The {W}ang--{L}andau algorithm in general state spaces:
Applications and
convergence analysis}.
\bjournal{Statist. Sinica}
\bvolume{20}
\bpages{209--233}.
\bid{issn={1017-0405}, mr={2640691}}
\bptok{imsref}%
\end{barticle}
%
\endbibitem

\bibitem[\protect\citeauthoryear{Atchad{\'e} et~al.}{2009}]{Atchade2009c}
%
\begin{bincollection}[mr]
\bauthor{\bsnm{Atchad{\'e}},~\bfnm{Y.}\binits{Y.}},
\bauthor{\bsnm{Fort},~\bfnm{G.}\binits{G.}},
\bauthor{\bsnm{Moulines},~\bfnm{E.}\binits{E.}} \AND
\bauthor{\bsnm{Priouret},~\bfnm{P.}\binits{P.}}
(\byear{2009}).
\btitle{Adaptive {M}arkov chain {M}onte {C}arlo: Theory and methods}.
In \bbooktitle{Bayesian Time Series Models}
\bpages{32--51}.
\bpublisher{Cambridge Univ. Press}, \baddress{Cambridge}.
\bptok{imsref}%
\end{bincollection}
%
\endbibitem

\bibitem[\protect\citeauthoryear{Bornn et~al.}{2011}]{bornn2011adaptive}
%
\begin{bmisc}[author]
\bauthor{\bsnm{Bornn},~\bfnm{L.}\binits{L.}},
\bauthor{\bsnm{Jacob},~\bfnm{P.}\binits{P.}},
\bauthor{\bsnm{Del~Moral},~\bfnm{P.}\binits{P.}} \AND
\bauthor{\bsnm{Doucet},~\bfnm{A.}\binits{A.}}
(\byear{2011}).
\bhowpublished{An adaptive interacting Wang--Landau algorithm for
automatic density
exploration. Available at \arxivurl{arXiv:1109.3829}.}
\bptok{imsref}%
\end{bmisc}
%
\endbibitem

\bibitem[\protect\citeauthoryear{Cunha~Netto et~al.}{2006}]{cunha2006wang}
%
\begin{barticle}[author]
\bauthor{\bsnm{Cunha~Netto},~\bfnm{A.~G.}\binits{A.~G.}},
\bauthor{\bsnm{Silva},~\bfnm{C.~J.}\binits{C.~J.}},
\bauthor{\bsnm{Caparica},~\bfnm{A.~A.}\binits{A.~A.}} \AND
\bauthor{\bsnm{Dickman},~\bfnm{R.}\binits{R.}}
(\byear{2006}).
\btitle{{Wang--Landau sampling in three-dimensional polymers}}.
\bjournal{Brazilian Journal of Physics}
\bvolume{36}
\bpages{619--622}.
\bptok{imsref}%
\end{barticle}
%
\endbibitem

\bibitem[\protect\citeauthoryear{{Fort} et~al.}{2011}]{FortMoulinesCLT}
%
\begin{bmisc}[author]
\bauthor{\bsnm{{Fort}},~\bfnm{G.}\binits{G.}},
\bauthor{\bsnm{{Moulines}},~\bfnm{E.}\binits{E.}},
\bauthor{\bsnm{{Priouret}},~\bfnm{P.}\binits{P.}} \AND
\bauthor{\bsnm{{Vandekerkhove}},~\bfnm{P.}\binits{P.}}
(\byear{2011}).
\bhowpublished{A central limit theorem for adaptive and interacting {M}arkov
chains. Available at \arxivurl{arXiv:1107.2574}.}
\bptok{imsref}%
\end{bmisc}
%
\endbibitem

\bibitem[\protect\citeauthoryear{Hastings}{1970}]{Hastings}
%
\begin{barticle}[author]
\bauthor{\bsnm{Hastings},~\bfnm{W.~K.}\binits{W.~K.}}
(\byear{1970}).
\btitle{{Monte Carlo sampling methods using Markov chains and their
applications}}.
\bjournal{Biometrika}
\bvolume{57}
\bpages{97--109}.
\bptok{imsref}%
\end{barticle}
%
\endbibitem

\bibitem[\protect\citeauthoryear{Kesten}{1958}]{Kesten1958}
%
\begin{barticle}[mr]
\bauthor{\bsnm{Kesten},~\bfnm{Harry}\binits{H.}}
(\byear{1958}).
\btitle{Accelerated stochastic approximation}.
\bjournal{Ann. Math. Statist}
\bvolume{29}
\bpages{41--59}.
\bid{issn={0003-4851}, mr={0093851}}
\bptok{imsref}%
\end{barticle}
%
\endbibitem

\bibitem[\protect\citeauthoryear{Liang}{2005}]{Liang2005a}
%
\begin{barticle}[mr]
\bauthor{\bsnm{Liang},~\bfnm{Faming}\binits{F.}}
(\byear{2005}).
\btitle{A generalized {W}ang--{L}andau algorithm for {M}onte {C}arlo
computation}.
\bjournal{J. Amer. Statist. Assoc.}
\bvolume{100}
\bpages{1311--1327}.
\bid{doi={10.1198/016214505000000259}, issn={0162-1459}, mr={2236444}}
\bptok{imsref}%
\end{barticle}
%
\endbibitem

\bibitem[\protect\citeauthoryear{Malakis, Kalozoumis and
Tyraskis}{2006}]{malakis2006monte}
%
\begin{barticle}[author]
\bauthor{\bsnm{Malakis},~\bfnm{A.}\binits{A.}},
\bauthor{\bsnm{Kalozoumis},~\bfnm{P.}\binits{P.}} \AND
\bauthor{\bsnm{Tyraskis},~\bfnm{N.}\binits{N.}}
(\byear{2006}).
\btitle{{Monte Carlo studies of the square Ising model with
next-nearest-neighbor interactions}}.
\bjournal{Eur. Phys. J. B}
\bvolume{50}
\bpages{63--67}.
\bptok{imsref}%
\end{barticle}
%
\endbibitem

\bibitem[\protect\citeauthoryear{Ngo and Diep}{2008}]{ngo2008phase}
%
\begin{barticle}[mr]
\bauthor{\bsnm{Ngo},~\bfnm{V.~Thanh}\binits{V.~T.}} \AND
\bauthor{\bsnm{Diep},~\bfnm{H.~T.}\binits{H.~T.}}
(\byear{2008}).
\btitle{Phase transition in {H}eisenberg stacked triangular antiferromagnets:
End of a controversy}.
\bjournal{Phys. Rev. E (3)}
\bvolume{78}
\bpages{031119, 5}.
\bid{doi={10.1103/PhysRevE.78.031119}, issn={1539-3755}, mr={2496090}}
\bptok{imsref}%
\end{barticle}
%
\endbibitem

\bibitem[\protect\citeauthoryear{Norris}{1998}]{Norris1998}
%
\begin{bbook}[mr]
\bauthor{\bsnm{Norris},~\bfnm{J.~R.}\binits{J.~R.}}
(\byear{1998}).
\btitle{Markov Chains}.
\bseries{Cambridge Series in Statistical and Probabilistic Mathematics}
\bvolume{2}.
\bpublisher{Cambridge Univ. Press}, \blocation{Cambridge}.
\bid{mr={1600720}}
\bptok{imsref}%
\end{bbook}
%
\endbibitem

\bibitem[\protect\citeauthoryear{Robert and Casella}{2004}]{Robert2004a}
%
\begin{bbook}[mr]
\bauthor{\bsnm{Robert},~\bfnm{Christian~P.}\binits{C.~P.}} \AND
\bauthor{\bsnm{Casella},~\bfnm{George}\binits{G.}}
(\byear{2004}).
\btitle{Monte {C}arlo Statistical Methods},
\bedition{2nd} ed.
\bpublisher{Springer}, \blocation{New York}.
\bid{mr={2080278}}
\bptok{imsref}%
\end{bbook}
%
\endbibitem

\bibitem[\protect\citeauthoryear{Silva, Caparica and
Plascak}{2006}]{silva2006wang}
%
\begin{barticle}[author]
\bauthor{\bsnm{Silva},~\bfnm{C.~J.}\binits{C.~J.}},
\bauthor{\bsnm{Caparica},~\bfnm{A.~A.}\binits{A.~A.}} \AND
\bauthor{\bsnm{Plascak},~\bfnm{J.~A.}\binits{J.~A.}}
(\byear{2006}).
\btitle{{Wang--Landau Monte Carlo simulation of the Blume--Capel model}}.
\bjournal{Phys. Rev. E (3)}
\bvolume{73}
\bpages{036702}.
\bptok{imsref}%
\end{barticle}
%
\endbibitem

\bibitem[\protect\citeauthoryear{Tierney}{1998}]{Tierney}
%
\begin{barticle}[mr]
\bauthor{\bsnm{Tierney},~\bfnm{Luke}\binits{L.}}
(\byear{1998}).
\btitle{A note on {M}etropolis--{H}astings kernels for general state spaces}.
\bjournal{Ann. Appl. Probab.}
\bvolume{8}
\bpages{1--9}.
\bid{doi={10.1214/aoap/1027961031}, issn={1050-5164}, mr={1620401}}
\bptok{imsref}%
\end{barticle}
%
\endbibitem

\bibitem[\protect\citeauthoryear{Wang and Landau}{2001a}]{Wang2001a}
%
\begin{barticle}[author]
\bauthor{\bsnm{Wang},~\bfnm{F.}\binits{F.}} \AND
\bauthor{\bsnm{Landau},~\bfnm{D.~P.}\binits{D.~P.}}
(\byear{2001}a).
\btitle{Determining the density of states for classical statistical
models: A
random walk algorithm to produce a flat histogram}.
\bjournal{Phys. Rev. E (3)}
\bvolume{64}
\bpages{56101}.
\bptok{imsref}%
\end{barticle}
%
\endbibitem

\bibitem[\protect\citeauthoryear{Wang and Landau}{2001b}]{Wang2001b}
%
\begin{barticle}[pbm]
\bauthor{\bsnm{Wang},~\bfnm{F.}\binits{F.}} \AND
\bauthor{\bsnm{Landau},~\bfnm{D.~P.}\binits{D.~P.}}
(\byear{2001}b).
\btitle{Efficient, multiple-range random walk algorithm to calculate the
density of states}.
\bjournal{Phys. Rev. Lett.}
\bvolume{86}
\bpages{2050--2053}.
\bid{issn={0031-9007}, pmid={11289852}}
\bptok{imsref}%
\end{barticle}
%
\endbibitem

\bibitem[\protect\citeauthoryear{Williams}{1991}]{williams}
%
\begin{bbook}[mr]
\bauthor{\bsnm{Williams},~\bfnm{David}\binits{D.}}
(\byear{1991}).
\btitle{Probability with Martingales}.
\bpublisher{Cambridge Univ. Press}, \blocation{Cambridge}.
\bid{mr={1155402}}
\bptok{imsref}%
\end{bbook}
%
\endbibitem

\bibitem[\protect\citeauthoryear{Zeitouni}{2006}]{zeitouni2006random}
%
\begin{barticle}[mr]
\bauthor{\bsnm{Zeitouni},~\bfnm{Ofer}\binits{O.}}
(\byear{2006}).
\btitle{Random walks in random environments}.
\bjournal{J. Phys. A}
\bvolume{39}
\bpages{R433--R464}.
\bid{doi={10.1088/0305-4470/39/40/R01}, issn={0305-4470}, mr={2261885}}
\bptok{imsref}%
\end{barticle}
%
\endbibitem

\end{thebibliography}
\end{document}